\newcommand{\Z}{\mbox{$\mathbb Z$}}
\newcommand{\R}{\mbox{$\mathbb R$}}     % For Real numbers
\newtheorem{theo}{Theorem}[section]
\newtheorem{lem}[theo]{Lemma}
\newtheorem{prop}[theo]{Proposition}
\newtheorem{conj}[theo]{Conjecture}
\newtheorem{cor}[theo]{Corollary}
\begin{document}

\title[Stolarksy's conjecture]{Stolarsky's conjecture and the sum of digits of polynomial values}

\author{Kevin G. Hare}
\address{Department of Pure Mathematics,  University of Waterloo, Waterloo, Ontario,  Canada,  N2L 3G1,}
\email{kghare@math.uwaterloo.ca}
\thanks{K.G. Hare was partially supported by NSERC}
\thanks{Computational support provided by CFI/OIT grant}

\author{Shanta Laishram}
\address{Department of  Mathematics,
Indian Institute of Science Education and Research, Bhopal, 462 023, India,}
\email{shanta@iiserbhopal.ac.in}

\author{Thomas Stoll}
\address{Institut de Math\'ematiques de Luminy, Universit\'e de la M\'editerran\'ee, 13288 Marseille Cedex 9, France,}
\email{stoll@iml.univ-mrs.fr}
\thanks{Th. Stoll was partially supported by an APART grant of the Austrian Academy of Sciences}

%\keywords{Sequences and sets; Digital expansions}
%\ccode{Mathematics Subject Classification 2000: 11B99, 11Y55}

\maketitle

\begin{abstract}
  Let $s_q(n)$ denote the sum of the digits in the $q$-ary expansion of an integer $n$.
  In 1978, Stolarsky showed that
  $$ \liminf_{n\to\infty} \frac{s_2(n^2)}{s_2(n)} = 0. $$ He conjectured that,
  as for $n^2$, this limit infimum should be 0 for higher powers of $n$. We prove and
  generalize this conjecture showing that for any polynomial
  $p(x)=a_h x^h+a_{h-1} x^{h-1} + \dots + a_0 \in \Z[x]$ with $h\geq 2$ and $a_h>0$
  and any base $q$, \[ \liminf_{n\to\infty} \frac{s_q(p(n))}{s_q(n)}=0.\]
  For any $\varepsilon > 0$ we give a bound on the minimal $n$ such that the ratio
  $s_q(p(n))/s_q(n) < \varepsilon$. Further, we give lower bounds for the number of
  $n < N$ such that $s_q(p(n))/s_q(n) < \varepsilon$.
\end{abstract}

\section{Introduction}

Let $q\geq 2$ and denote by $s_q(n)$ the sum of digits in the $q$-ary representation of an integer $n$. In recent years, much effort has been made to get a better understanding of the distribution properties of $s_q$ regarding certain subsequences of the positive integers. We mention the ground-breaking work by C.~Mauduit and J.~Rivat on the distribution of $s_q$ of primes~\cite{MR09-1} and of squares~\cite{MR09-2}. In the case of general polynomials $p(n)$ of degree $h\geq 2$ very little is known. For the current state of knowledge, we refer to the work of C.~Dartyge and G.~Tenenbaum~\cite{DT06}, who provided some density estimates for the evaluation of $s_q(p(n))$ in arithmetic progressions.  The authors \cite{HLS} recently examined the special case when $s_q(p(n)) \approx s_q(n)$.

A problem of a more elementary (though, non-trivial) nature is to study extremal properties of $s_q(p(n))$. Here we will always assume that
\begin{equation}\label{genpoly}
  p(x)=a_h x^h+a_{h-1} x^{h-1} + \dots + a_0 \in \Z[x]
\end{equation}
is a polynomial of degree $h\geq 2$ with leading coefficient $a_h>0$.

In the binary case when $q = 2$, B.~Lindstr\"om~\cite{Li97} showed that
\begin{equation}\label{lind}
  \limsup_{n \to \infty} \frac{s_2(p(n))}{\log_2 n}= h.
\end{equation}
In the proof of~(\ref{lind}), Lindstr\"om uses a sequence of integers $n$ with many $1$'s in their binary expansions such that $p(n)$ also has many $1$'s.
The special case $p(n)=n^2$ of~(\ref{lind}) has been reproved by M.~Drmota and J.~Rivat~\cite{DR05} with constructions due to J. Cassaigne and G. Baron.

On the other hand, it is an intriguing question whether it is possible to generate infinitely many integers $n$ such that $p(n)$ has few $1$'s compared to $n$. If this is possible, then this is indeed a rare event.
It is well-known~\cite{De75, Pe02} that the average order of magnitude of $s_q(n)$ and $s_q(n^h)$ is
\begin{equation}\label{avorder}
   \sum_{n<N} s_q(n)\sim \frac{1}{h}\sum_{n<N} s_q(n^h)\sim \frac{q-1}{2\log q} \; N\log N.
\end{equation}
In particular, the average value of $s_q(n^h)$ is $h$ times larger than the
    average value of $s_q(n)$.

In 1978, K. Stolarsky~\cite{St78} proved several results on the extremal values of $s_q(p(n))/s_q(n)$ for the special case when $q = 2$ and $p(n) = n^h$. He showed that the maximal order of magnitude is $$c(h) (\log_2 n)^{1-1/h},$$ where $c(h)$ only depends on $h$. This result is best possible, which follows from the Bose-Chowla theorem~\cite{BC62, HR83}. His proof can be generalized to base $q$ and to general polynomials $p(n)$.
Although this generalization is straightforward, we include it here for completeness. Recall that $p(n)$ may have negative coefficients as well.
\begin{theo}\label{limsupsto} Let $p(x) \in \Z[x]$ have degree at least $2$ and positive leading coefficient.
  \begin{enumerate}
  \item If $p(n)$ has only nonnegative coefficients then there exists $c_1$, dependent only on $p(x)$ and $q$, such that for all $n\geq 2$,
  $$\frac{s_q(p(n))}{s_q(n)}\leq c_1 (\log_q n)^{1-1/h}.$$
  This is best possible in that there is a constant $c_1'$, dependent only on $p(x)$, such that
  $$\frac{s_q(p(n))}{s_q(n)}> c_1' (\log_q n)^{1-1/h}$$
  infinitely often.
  \item If $p(n)$ has at least one negative coefficient then there exists $c_2$ and $n_0$, dependent only on $p(x)$ and $q$,
   such that for all $n\geq n_0$,
  $$\frac{s_q(p(n))}{s_q(n)}\leq c_2 \log_q n.$$
  This is best possible in that for all $\varepsilon>0$ we have
  $$\frac{s_q(p(n))}{s_q(n)}> (q-1-\varepsilon) \log_q n$$
  infinitely often.
  \end{enumerate}
\end{theo}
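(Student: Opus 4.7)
The plan is to prove the upper and lower bounds in both parts separately, with the upper bounds coming from elementary digit-sum estimates and the lower bounds from explicit constructions (a Bose--Chowla $B_h$-set in part~(1), and $n = q^M$ in part~(2)).

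For the upper bound in~(1), I would use the two elementary inequalities $s_q(a+b) \le s_q(a) + s_q(b)$ and $s_q(ab) \le s_q(a) s_q(b)$. Iterating gives $s_q(n^i) \le s_q(n)^i$, and then
\[
  s_q(p(n)) \le \sum_{i=0}^h s_q(a_i)\, s_q(n^i) \le C_p \, s_q(n)^h
\]
for a constant $C_p$ depending only on $p$. Combining this with the trivial bound $s_q(p(n)) \le (q-1)(1 + \log_q p(n)) \le C_p' \log_q n$, one obtains
\[
  \frac{s_q(p(n))}{s_q(n)} \le \min\!\left(C_p\, s_q(n)^{h-1},\ \frac{C_p' \log_q n}{s_q(n)}\right),
\]
and optimizing over $s_q(n) \in [1, (q-1)\log_q n]$ yields the bound $O((\log_q n)^{1-1/h})$, attained when $s_q(n) \asymp (\log_q n)^{1/h}$.

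For the lower bound in~(1), I would apply the Bose--Chowla construction~\cite{BC62} to produce, for each large $k$, a $B_h$-set $A \subset \{0, 1, \dots, N\}$ of cardinality $k$ with $N \asymp k^h$. Setting $n = \sum_{a \in A} q^{Ma}$ for a fixed $M > \log_q(h!)$ and writing $n^h = \sum_s c_s q^{Ms}$, the $B_h$-property forces each $c_s$ to be at most $h!$, hence strictly less than $q^M$, so the $(Ms)$-blocks do not overlap in the base-$q$ expansion. The number of distinct $s$ that appear is $\binom{k+h-1}{h} \asymp k^h$, yielding
\[
  s_q(n^h) \ge \binom{k+h-1}{h} \gg k^h.
\]
Since $s_q(n) = k$ and $\log_q n \asymp M k^h$, this gives $s_q(n^h)/s_q(n) \gg k^{h-1} \gg (\log_q n)^{(h-1)/h}$. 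To pass from $n^h$ to the full polynomial $p(n) = a_h n^h + (\text{lower order})$, I would note that the lower-order terms have magnitude $O(n^{h-1})$ and so can only perturb digit positions in the bottom $(h-1)/h$-fraction of the expansion of $a_h n^h$; a positive proportion of the top, well-separated blocks survives, preserving the order of magnitude $k^h$.

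For part~(2), the upper bound is immediate: since $a_h > 0$, there exists $n_0$ with $0 < p(n) \le 2 a_h n^h$ for $n \ge n_0$, whence $s_q(p(n)) \le (q-1)(1 + \log_q p(n)) \le c_2 \log_q n$. For the lower bound I would take $n = q^M$, so $s_q(n) = 1$, and split $p = P_+ - P_-$ into its positive and negative parts, with $P_- \ne 0$. For $M$ large, the expansions of $P_+(q^M)$ and $P_-(q^M)$ occupy disjoint blocks of digit positions, and the subtraction forces borrows that propagate across the zero gaps between blocks, producing at least $(h-j)M - O(1)$ digits equal to $q-1$ in the base-$q$ expansion of $p(q^M)$, where $j$ is the largest index with $a_j < 0$. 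Since $h - j \ge 1$, this gives $s_q(p(q^M)) \ge (q-1)\log_q n - O(1) \ge (q-1-\varepsilon)\log_q n$ for $M$ sufficiently large. The main technical care lies in part~(1), where one must verify that perturbation by the lower-order terms of $p$ leaves a constant fraction of the arranged digits of $a_h n^h$ intact; part~(2)'s borrow analysis is more elementary but also requires attention when several negative coefficients interact, or when additional positive lower-order terms threaten to cancel a borrowed run of $(q-1)$'s.
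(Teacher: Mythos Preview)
Your overall strategy matches the paper's: subadditivity/submultiplicativity for the upper bound in~(1), a Bose--Chowla $B_h$-set for the lower bound in~(1), the trivial length bound for the upper bound in~(2), and $n=q^M$ for the lower bound in~(2). Two points need correction.

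For the lower bound in~(1), the perturbation step from $a_h n^h$ to $p(n)$ is a genuine gap. Adding the nonnegative lower-order part $r(n)=O(n^{h-1})$ can only disturb digits below position roughly $M(h-1)\max A$ (up to a single carry), so you need $\gg k^h$ of the $\binom{k+h-1}{h}$ distinct $h$-fold sums from $A$ to exceed $(h-1)\max A$. For a general $B_h$-set this need not hold: if all but one element of $A$ lies below $(1-1/h)\max A$, then only $O(k^{h-1})$ multisets have large enough sum, and you lose a full factor of $k$ in the ratio. The paper avoids this entirely by enlarging the spacing: taking the exponents $y_i\equiv 0\pmod{k+1}$ with $q^k>\lambda(h+1)!$ (where $\lambda=\max|a_i|$), one expands $p(n)$ itself, not just $a_h n^h$, as $\sum_R c_R\, q^{R}$ with every coefficient satisfying $0<c_R<q^k$. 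Since all $a_i\ge 0$ and $a_h>0$, each $c_R$ with $R$ an $h$-fold sum is strictly positive, so $s_q(p(n))\ge\binom{N+h-1}{h}$ directly, with no survival argument needed.

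For the lower bound in~(2), your count of $(h-j)M$ digits equal to $q-1$ is incorrect when $a_{j+1}>0$: the borrow triggered near position $jM$ halts at the next nonzero block, producing only about $M$ such digits, not $(h-j)M$. This still suffices for the conclusion, but taking $j$ to be the \emph{smallest} index with $a_j<0$, as the paper does, is cleaner: the terms with index $<j$ are then nonnegative and split off as separate noninterfering blocks, while the remaining factor $a_h q^{(h-j)k}+\cdots+a_{j+1}q^k+a_j$ is of the form $Aq^k-|a_j|$, to which the identity $s_q(Aq^k-b)=s_q(A-1)+(q-1)k-s_q(b-1)$ applies directly, giving $s_q(p(q^k))\ge (q-1)k - s_q(|a_j|-1)$ with no further interactions to track.
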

The proof of this result along with some useful preliminary results are
    given in Section \ref{sec:prelim}.

For the minimal order of $s_q(p(n))/s_q(n)$, Stolarsky treated the special case $q=2$ and $p(n)=n^2$. He
proved that there are infinitely many integers $n$ such that
\begin{equation}\label{stolthm}
  \frac{s_2(n^2)}{s_2(n)}\leq \frac{4 (\log \log n)^2}{\log n}.
\end{equation}
He conjectured that an analogous result is true for every fixed $h\geq 2$ but he did ``not see how to prove this''.
\begin{conj}[Stolarsky~\cite{St78}, 1978]\label{conj}
  For fixed $h\geq 2$,
  $$ \liminf_{n\to \infty} \frac{s_2(n^h)}{s_2(n)}=0.$$
\end{conj}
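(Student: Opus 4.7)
To prove Stolarsky's conjecture, my plan is to exhibit, for every $\varepsilon > 0$, infinitely many integers $n$ with $s_2(n^h) < \varepsilon\, s_2(n)$. The difficulty is real: every $1$-bit of $n$ tends to propagate to several $1$-bits of $n^h$, so elementary families such as the repunits $n = 2^k - 1$ (for which $s_2(n) = k$ and $s_2(n^h)$ is of order $(h-1)k$) only give a ratio bounded below by $h-1$, not tending to $0$. One must engineer massive carry cancellation in the formation of $n^h$.

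The first step is to place $n$ in the residue class of an approximate $h$-th root of $1$, namely $n = 1 + 2^A t$ for a parameter $t$ and a shift $A$. The binomial theorem gives
\[
n^h = 1 + \sum_{k=1}^{h} \binom{h}{k} (2^A t)^k,
\]
whose summands lie in the distinct bit-intervals $[Ak, Ak + k \log_2 t]$. When $A$ is moderately large these intervals are disjoint and $s_2(n^h) = 1 + \sum_{k=1}^h s_2(\binom{h}{k} t^k)$, while $s_2(n) = 1 + s_2(t)$. The task thus reduces to controlling $s_2(t^k)$ for $1 \le k \le h$ in terms of $s_2(t)$. For $k = h$ this is essentially a smaller instance of the conjecture itself, so the disjoint-blocks regime cannot work purely inductively.

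To escape this circularity, I would shrink $A$ so that the binomial blocks \emph{overlap}, forcing carry propagation between them. Concretely, take $t = 2^B - 1$ and $A = B + O(1)$; then the multinomial expansion $n^h = \sum_{k,j} \binom{h}{k} \binom{k}{j} (-1)^{k-j} 2^{Ak + Bj}$ has many colliding exponents, and the coefficient collected at each such exponent is a binomial sum. With judicious choices of $A$ and $B$, these colliding coefficients should satisfy Vandermonde-type identities that force substantial cancellations, dramatically reducing $s_2(n^h)$.

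The main obstacle is to realize this hope explicitly, namely to exhibit a parametric family $(A, B)$ for which the arithmetic of carries does collapse $s_2(n^h)$ down to a sublinear function of $s_2(n) = B + O(1)$. This is the crux of the argument, combining combinatorial identities for the binomial sums with a careful digit-by-digit analysis of the resulting binary string. I would expect the proof to first isolate the simplest colliding case (e.g.\ $A = B$, where the multinomial collapses to a polynomial in a single variable), and then iterate a block-shifting or self-similar construction so that each iteration extends the family and improves the ratio, producing infinitely many $n$ with $s_2(n^h)/s_2(n)$ below the prescribed $\varepsilon$.
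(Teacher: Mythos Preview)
Your proposal has the right instinct---force a single subtraction in the base-$2$ expansion of $n$ so that $s_2(n)$ picks up a term linear in the block length, while arranging that $n^h$ has no such subtraction---but the plan stops exactly where the real work begins, and the concrete candidate you suggest fails.

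Take your ``simplest colliding case'' $A=B$, so $n=2^{2B}-2^B+1=f(2^B)$ with $f(x)=x^2-x+1$. Then $s_2(n)=B+1$, as desired. But $f(x)^2=x^4-2x^3+3x^2-2x+1$ still has negative coefficients, so substituting $x=2^B$ produces \emph{two} borrow chains, and a direct computation gives $s_2(n^2)=2B$; the ratio is roughly $2$, not tending to $0$. The same phenomenon persists for higher $h$: the powers $(x^2-x+1)^h$ always have coefficients of both signs, so the borrowing never disappears. Your appeal to ``Vandermonde-type identities'' and an unspecified iteration is not a proof; you have not exhibited any family in which the hoped-for collapse actually occurs.

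The paper's device is precisely the missing idea. One looks for a polynomial $t(x)\in\Z[x]$ with a single small negative coefficient such that \emph{every} power $t(x)^h$, $h\ge 2$, has only positive coefficients. Then $n=t(q^k)$ has $s_q(n)\ge (q-1)k$ from the one borrow, while $s_q(n^h)=\sum_i s_q(c_{i,h})$ is bounded independently of $k$ because the $c_{i,h}>0$ and the blocks do not interfere. The paper takes $t_m(x)=m x^4+m x^3-x^2+m x+m$ with $m\ge 3$; one checks directly that $t_m^2$ and $t_m^3$ have positive coefficients, and since every $h\ge 2$ is a nonnegative combination of $2$ and $3$, so does $t_m^h$. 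No Vandermonde identities or iterative self-similarity are needed: once the positivity of the coefficients of $t_m^h$ is in hand, the ratio $s_q(n^h)/s_q(n)$ is $O(1/k)$ and the conjecture follows immediately.
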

By naive methods, it can be quite hard to find even a single value
    $n$ such that $s_2(n^h)<s_2(n)$ for some $h$, let alone observe that the
    limit infimum goes to $0$.
For example, an extremely brute force calculation shows that
    the minimal $n$ such that $s_2(n^3) < s_2(n)$ is
    $n=407182835067\approx 2^{39}$.

In Section \ref{sec:mtheo} we  prove and generalize
    Conjecture~\ref{conj}.
\begin{theo}\label{mtheo}
  We have
  $$ \liminf_{n\to \infty} \frac{s_q(p(n))}{s_q(n)}=0.$$
\end{theo}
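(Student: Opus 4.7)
The plan is to prove the result constructively by exhibiting, for each $\varepsilon > 0$, an explicit parametric family of integers $n = n_L$ (with $L$ a large integer parameter) such that $s_q(p(n_L))/s_q(n_L) < \varepsilon$. This follows and generalizes Stolarsky's approach in the special case $p(x) = x^2$, $q = 2$. The broad idea is to engineer $n_L$ so that its base-$q$ representation contains many nonzero digits, while the base-$q$ expansion of $p(n_L)$ is forced to be very sparse. The key mechanism is to exploit carry cascades in base-$q$ arithmetic: arrange $n_L$ so that, in the multinomial expansion of $p(n_L)=\sum_i a_i n_L^i$, many cross-terms are placed at the same $q$-adic position, producing large coefficients which then collapse into few surviving digits once carries propagate.

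Concretely, I would first treat the monomial case $p(x) = a_h x^h$, which captures the essential combinatorics. A natural candidate is $n_L = \sum_{j=1}^L c_j q^{M_j}$, where the exponents $M_j = M_j(L)$ are chosen in a nearly arithmetic progression so that many of the sums $M_{j_1}+\cdots+M_{j_h}$ coincide, producing large multinomial-type coefficients at each ``level''. Choosing the block-spacing and the digits $c_j$ so that these coefficients are multiples of suitably high powers of $q$ forces long carry cascades that drastically reduce $s_q(p(n_L))$. A bound of the form $s_q(n_L) \gtrsim L$ together with $s_q(p(n_L)) = o(L)$ would suffice. For the general case $p(x) = a_h x^h + \cdots + a_0$, I would reduce to the monomial case, either by showing that the lower-order terms $a_{h-1} n_L^{h-1} + \cdots + a_0$ occupy $q$-adic positions below those involved in the leading cascade and have controlled digit sums, or by an affine change of variable $n = m+c$ that kills the subleading coefficient and shifts the problem to an easier depressed polynomial.

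The hardest part will be to engineer $n_L$ so that the high multinomial coefficients in $p(n_L)$ align exactly with the structure needed to force the carry cascades: the coefficients $a_i$, the exponents $M_j$, and the base $q$ must all conspire correctly. For $p(x) = x^2$ this combinatorial matching is essentially Stolarsky's original construction, but for higher $h$ and arbitrary $p$ the multinomial structure is richer and the simultaneous control of contributions of all degrees $0 \le i \le h$ appears delicate. An additional subtlety arises when $p$ has negative coefficients: these generate borrows rather than carries, and a single stray borrow can \emph{increase} the digit sum by up to $(q-1)M$ over a block of length $M$, which would wipe out the gains of the construction. Handling this presumably requires either a hierarchical $n_L$ with multiple scales of digits, each dedicated to taming the contribution of one power $n_L^i$, or else an auxiliary shift ensuring that only nonnegative block-coefficients survive before the carry analysis is performed.
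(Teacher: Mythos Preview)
Your proposal follows Stolarsky's original route for $p(x)=x^2$: take $n$ with many nonzero digits and arrange for carry cascades to collapse $s_q(n^h)$. You correctly identify that the hard part is engineering the multinomial collisions for general $h$, $q$, and $p$, and that stray borrows can destroy the construction. But this is precisely the obstacle Stolarsky could not overcome, and your outline does not supply the missing combinatorial mechanism. In particular, forcing the multinomial coefficients at each level to be divisible by high powers of $q$ simultaneously for all degrees $0\le i\le h$ is not something one can achieve by tuning a few parameters; no concrete $n_L$ is given, and the heuristic ``$s_q(p(n_L))=o(L)$'' is not substantiated.

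The paper's proof inverts your use of borrows. Rather than building $n$ from many positive digits and hoping carries shrink $s_q(n^h)$, it takes a template with a \emph{single negative} coefficient,
\[
t_m(x)=m x^4+m x^3 - x^2 + m x + m,\qquad m\ge 3,
\]
and sets $n=t_m(q^k)$. The $-q^{2k}$ term forces one long borrow, so by~(\ref{splitneg}) one gets $s_q(n)=(q-1)k+O_m(1)$, which grows linearly in $k$ with essentially no work. The crucial lemma is that for every $h\ge 2$ the polynomial $t_m(x)^h$ has \emph{all positive} coefficients (this is checked directly for $h=2,3$ and follows multiplicatively for larger $h$), each bounded by $(2mh)^h$. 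Hence for $q^k>(2mh)^h$ the $4h+1$ blocks in $t_m(q^k)^h$ are noninterfering and $s_q(n^h)=\sum_i s_q(c_{i,h}(m))$ is bounded independently of $k$. Letting $k\to\infty$ gives the result for $p(x)=x^h$. For general $p$, one first replaces $p(x)$ by $p(x+b)$ to make all coefficients nonnegative, then takes $m$ large enough that $p(t_m(x)+b)$ has nonnegative coefficients; the same noninterference argument applies.

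So the missing idea in your plan is exactly this: use a borrow to inflate $s_q(n)$ rather than to threaten $s_q(p(n))$, and choose the template so that its powers have only positive coefficients, eliminating borrows on the $p(n)$ side entirely. No carry-cascade engineering is needed.
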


In view of our generalization, it is natural to ask how quickly we can expect this ratio to go to zero. Recall that $h = \deg p$.
\begin{theo}\label{mtheo2}
There exist explicitly computable constants $B$ and $C$,
    dependent only on $p(x)$ and $q$,
    such that for all $\varepsilon$ with $0<\varepsilon<h(4h+1)$
    there exists an $n < B \cdot C^{1/\varepsilon}$ with
    \[ \frac{s_q(p(n))}{s_q(n)} < \varepsilon. \]
\end{theo}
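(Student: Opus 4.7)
The plan is to make the construction underlying Theorem~\ref{mtheo} effective by tracking all constants through the argument.

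I assume that the proof of Theorem~\ref{mtheo} produces a family of witnesses $\{n_k\}_{k\geq 1}$, indexed by a positive integer $k$, together with explicit bounds of the form
\[
  s_q(n_k) \;\geq\; \alpha\, k, \qquad s_q(p(n_k))\;\leq\; \beta, \qquad n_k \;\leq\; B_0\cdot C_0^{\,k},
\]
where $\alpha,\beta, B_0, C_0$ depend only on $p$ and $q$. Constructions of this type typically build $n_k$ by concatenating $k$ well-separated blocks in the $q$-ary expansion, so that $s_q(n_k)$ grows linearly in $k$; one then analyzes $p(n_k)=\sum_j a_j n_k^j$ to verify that the multinomial expansions collide in a controlled way and produce only a bounded number of non-zero digits in base $q$. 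Granting these ingredients, the size of $n_k$ is dominated by its top block, which gives the stated exponential upper bound.

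For a given $\varepsilon$, I would take the smallest $k$ for which $\beta/(\alpha k)<\varepsilon$, namely $k=\lceil \beta/(\alpha\varepsilon)\rceil$. With this choice the ratio $s_q(p(n_k))/s_q(n_k)$ falls below $\varepsilon$, and substituting into $n_k\leq B_0 C_0^{\,k}$ gives
\[
  n_k \;\leq\; B_0\cdot C_0^{\,\beta/(\alpha\varepsilon)+1}\;=\;B\cdot C^{1/\varepsilon}
\]
after setting $B:=B_0 C_0$ and $C:=C_0^{\,\beta/\alpha}$. The upper threshold $h(4h+1)$ on $\varepsilon$ presumably corresponds to the largest admissible ratio $\beta/\alpha$, namely the ratio realised by the smallest meaningful instance $k=1$ of the construction; for $\varepsilon$ above that threshold a direct base-case argument using small $n$ suffices, and the statement becomes either trivial or easier.

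The main technical obstacle will be the combinatorial analysis of $p(n_k)$. When $n_k=\sum_{i=1}^{k} m_i q^{L_i}$ is expanded through $p$, one must argue that in base $q$ only a bounded number of digit positions can receive non-zero contributions, uniformly in $k$. This requires choosing the spacings $L_{i+1}-L_i$ large enough (in terms of $h$, $q$, and $|a_0|,\ldots,|a_h|$) to prevent carry interference between different monomials of $n_k^j$, and then a careful accounting of how the signs and magnitudes of the coefficients of $p$ interact with the multisets of exponents arising from each multinomial expansion. Once this analysis is in place and the constants $\alpha,\beta,B_0,C_0$ have been made explicit, the reduction to the claimed bound $n<B\cdot C^{1/\varepsilon}$ is a routine chase of constants.
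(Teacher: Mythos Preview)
Your high-level reduction is exactly right and matches the paper: one extracts from the Theorem~\ref{mtheo} construction explicit constants $\alpha,\beta,B_0,C_0$ with $s_q(n_k)\geq \alpha k$, $s_q(p(n_k))\leq \beta$, $n_k\leq B_0 C_0^{k}$, then takes $k\approx \beta/(\alpha\varepsilon)$ and reads off $B,C$. The paper does precisely this, with $\alpha=q-1$ and $n_k=t_m(q^k)+b$ for fixed $m,b$ depending only on $p,q$.

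However, your guess at the underlying construction is wrong, and the mechanism you propose would not produce a bounded $\beta$. You write $n_k=\sum_{i=1}^k m_i q^{L_i}$ as a concatenation of $k$ well-separated positive blocks. But then the multinomial expansion of $n_k^h$ has $\binom{k+h-1}{h}$ noninterfering terms with positive coefficients, so $s_q(n_k^h)$ grows like $k^h$, not $O(1)$; no ``controlled collision'' can save this when all blocks are positive. The paper's device is different and rather specific: it takes $n_k=t_m(q^k)$ for the fixed degree-$4$ polynomial $t_m(x)=mx^4+mx^3-x^2+mx+m$. The single negative coefficient forces a borrow that fills roughly $k$ digits with $q-1$, giving $s_q(n_k)\geq (q-1)k$. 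The crucial point (Lemma~\ref{lemuseful}) is that for $m\geq 3$ every coefficient of $t_m(x)^h$ is \emph{positive} and bounded by $(2mh)^h$, so $t_m(q^k)^h$ has only $4h+1$ noninterfering blocks of bounded size and $s_q(p(n_k))\leq \beta$ with $\beta$ independent of $k$. The shift by $b$ and the lower bound on $m$ handle general $p$ with possibly negative coefficients; the threshold $\varepsilon<h(4h+1)$ is there to guarantee $q^k>m+b$ so the splitting formulas apply, rather than being the ratio $\beta/\alpha$ as you surmise.

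In short: keep your outer argument, but replace the ``$k$ blocks'' picture by the single-negative-coefficient polynomial $t_m$ and its positivity lemma; that is where the actual work lies.
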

The proof of this result along with an explicit construction for $B$ and $C$
    is given in Section \ref{sec:mtheo2}.
As a nice Corollary to this result we have
\begin{cor}
There exists a constant $C_0$, dependent only on $p(x)$ and $q$,
    such that there exists infinitely many $n$ with
    \[ \frac{s_q(p(n))}{s_q(n)} \leq \frac{C_0}{\log n}. \]
\end{cor}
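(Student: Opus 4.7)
The plan is to derive the corollary directly from Theorem~\ref{mtheo2} by inverting the exponential bound $n<B\cdot C^{1/\varepsilon}$ into a logarithmic bound on the ratio. Without loss of generality assume $B,C>1$, so all logarithms below are positive.

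First I would fix a sequence $\varepsilon_k \to 0^+$ with each $\varepsilon_k < h(4h+1)$, for instance $\varepsilon_k = 1/k$ for $k$ sufficiently large. Applying Theorem~\ref{mtheo2} to each $\varepsilon_k$ produces an integer $n_k$ with
\[
  n_k < B\cdot C^{1/\varepsilon_k}\qquad\text{and}\qquad \frac{s_q(p(n_k))}{s_q(n_k)} < \varepsilon_k.
\]
Taking logarithms of the first bound and solving for $\varepsilon_k$ gives $\varepsilon_k < (\log C)/(\log n_k - \log B)$, so that for $n_k \geq B^2$,
\[
  \frac{s_q(p(n_k))}{s_q(n_k)} < \varepsilon_k < \frac{2\log C}{\log n_k}.
\]
Setting $C_0 := 2\log C$ then gives the desired inequality at each such $n_k$.

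The remaining step is to exhibit infinitely many distinct such $n_k$. For any fixed $n$ with $p(n)\neq 0$ the ratio $s_q(p(n))/s_q(n)$ is a fixed positive rational, so once $\varepsilon_k$ drops below it, $n$ can no longer play the role of $n_k$. Since $p$ has degree $h\geq 2$, only finitely many values of $n$ yield the trivial ratio $0$ (namely the integer roots of $p$), and the explicit construction behind Theorem~\ref{mtheo2} is plainly not confined to those. Hence the sequence $(n_k)$ takes infinitely many distinct values and, being a sequence of positive integers in which each value appears only finitely often, it tends to infinity. Those $n_k$ that exceed $B^2$ (all but finitely many) furnish the required infinite family.

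The argument is essentially a book-keeping step; the only real point is that the exponential shape $B\cdot C^{1/\varepsilon}$ of the bound in Theorem~\ref{mtheo2} inverts directly into a $1/\log n$ bound. The mild obstacle, namely verifying that the $(n_k)$ really do escape to infinity rather than being trapped at the finitely many integer roots of $p$, is handled by the elementary observation above, and no further input beyond Theorem~\ref{mtheo2} is required.
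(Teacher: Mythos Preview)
Your proposal is correct and follows essentially the same route as the paper: invert the exponential bound $n<B\cdot C^{1/\varepsilon}$ from Theorem~\ref{mtheo2} into a $1/\log n$ bound on the ratio. You are in fact more careful than the paper on two points: you handle the $\log B$ term cleanly via the $n_k\ge B^2$ device (yielding $C_0=2\log C$ rather than the paper's slightly casual $C_0=\log C$), and you explicitly justify why infinitely many distinct $n_k$ arise, whereas the paper leaves this implicit in the construction.
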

This is an improvement and generalization upon~\eqref{stolthm}.

\begin{proof}
By solving for $\varepsilon$ in $n < B \cdot C^{1/\varepsilon}$, one
    easily sees that $\varepsilon < \frac{\log C }{\log n -\log B}$.
Without loss of generality we may assume that  $B > 1$, hence we can take $C_0 = \log C $.
\end{proof}

One might expect that the ratio $s_q(p(n))/s_q(n)$ is small only rarely, with most of its time being spent near $h = \deg p$.
It turns out that this ratio is small somewhat more often than expected.

\begin{theo} \label{mtheo2.5}
For any $\varepsilon > 0$ there exists an explicitly computable $\alpha > 0$, dependent only on $\varepsilon$, $p(x)$ and $q$,
such that
 \[ \# \left\{n < N : \quad \frac{s_q(p(n))}{s_q(n)} < \varepsilon
    \right\} \gg N^\alpha  \]
where the implied constant also only depends on $\varepsilon$, $p(x)$ and $q$.
\end{theo}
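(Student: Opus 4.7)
The plan is to amplify the single good integer provided by Theorem~\ref{mtheo2} into a family of size at least $N^\alpha$ via a Taylor-expansion construction. The starting point is the observation that for integers of the form $n = t_1 + t_2 q^K$ with $t_1 < q^K$, the Taylor expansion of $p$ around $t_1$ gives
\[ p(t_1 + t_2 q^K) = \sum_{\ell = 0}^{h} \frac{p^{(\ell)}(t_1)}{\ell!}\, t_2^\ell\, q^{\ell K}. \]
If $K$ is chosen large enough in terms of $p$, $t_1$, $t_2$ (it suffices that $q^K$ exceeds each $|\frac{p^{(\ell)}(t_1)}{\ell!} t_2^\ell|$), the summands occupy disjoint blocks of digits in base $q$, so
\[ s_q(p(n)) = s_q(p(t_1)) + s_q(a_h t_2^h) + \sum_{\ell=1}^{h-1} s_q\!\Bigl(\tfrac{p^{(\ell)}(t_1)}{\ell!}\, t_2^\ell\Bigr), \qquad s_q(n) = s_q(t_1) + s_q(t_2). \]

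To exploit this, I would fix a scaled parameter $\varepsilon' = \varepsilon/C(p,q,h)$ with $C$ to be determined, and choose $t_1$ good for $p$ (ratio $< \varepsilon'$) and $t_2$ good for the monomial polynomial $a_h x^h$ (ratio $< \varepsilon'$). The ``pure'' contributions coming from $\ell = 0$ and $\ell = h$ then sum to at most $\varepsilon'(s_q(t_1) + s_q(t_2)) = \varepsilon' s_q(n)$, while each ``cross'' contribution from $1 \leq \ell \leq h-1$ is crudely bounded by $O_{p,q}(\log_q n)$. Each admissible pair $(t_1, t_2)$ with $t_1 + t_2 q^K < N$ yields a distinct $n < N$ with ratio $< \varepsilon$, so the count is at least (number of good $t_1$ up to $q^K$) times (number of good $t_2$ up to $N/q^K$). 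Starting from the single-instance bound of Theorem~\ref{mtheo2} and iterating this bilinear construction a bounded number of times (with appropriate bookkeeping for the two polynomials $p$ and $a_h x^h$), one obtains $N^\alpha$ good integers for an explicit $\alpha = \alpha(\varepsilon,p,q) > 0$.

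The main obstacle is making the cross terms negligible uniformly over a large family. Theorem~\ref{mtheo2} as stated yields templates with $s_q(t) = O(1/\varepsilon')$, independent of $N$, so the cross-term contribution $O(\log_q n)$ can overwhelm $\varepsilon' s_q(n)$ as $N \to \infty$. To bridge this gap, one must either refine the proof of Theorem~\ref{mtheo2} to guarantee \emph{dense} templates satisfying $s_q(t) \gg_{\varepsilon,p,q} \log_q t$, or else produce such dense templates independently---for instance by concatenating widely-spaced $q$-shifted copies of a fixed good base $n_0$ and using the exact invariance $s_q(a_h (q^j n_0)^h)/s_q(q^j n_0) = s_q(a_h n_0^h)/s_q(n_0)$ available for monomials, together with an analogue for general $p$ with controlled loss. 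With enough dense templates in hand, the Taylor-expansion recursion preserves the ratio bound up to an absorbable constant factor, and $\alpha$ is bounded below by the density exponent produced at the base case.
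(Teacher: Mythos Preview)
Your proposal has a genuine gap, and it is precisely the one you flag yourself: the cross terms $\frac{p^{(\ell)}(t_1)}{\ell!}\,t_2^\ell$ for $1\le \ell\le h-1$ contribute $O_{p,q}(\log_q n)$ to $s_q(p(n))$, while the templates supplied by Theorem~\ref{mtheo2} have $s_q(t)$ bounded independently of $N$. Your proposed repair via concatenation of $q$-shifted copies does not work as stated. For the monomial case, taking $n=\sum_{j=0}^{r-1} n_0 q^{jM}$ gives $s_q(n)=r\,s_q(n_0)$ but $n^h=n_0^h\bigl(\sum_j q^{jM}\bigr)^h$, and the second factor has digit sum growing like $r^h$, not $r$; the ratio deteriorates by a factor $r^{h-1}$. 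Using pure shifts $n=n_0 q^j$ preserves the ratio exactly but yields only $O(\log N)$ good integers below $N$, not $N^\alpha$. The alternative you mention---refining Theorem~\ref{mtheo2} to produce dense templates with $s_q(t)\gg_{\varepsilon,p,q}\log_q t$---is in fact the right direction, but you have not carried it out, and once you do, the entire Taylor-expansion iteration becomes unnecessary.

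The paper's argument is much more direct and avoids all of this. The template $n=t_m(q^k)$ used in the proof of Theorem~\ref{mtheo} already has \emph{two} free parameters. Fixing $k=k_0$ and letting $m$ range over all integers with $i$ $q$-ary digits (with $k_0$ chosen so that $k_0\asymp_{\varepsilon,h} i$), one obtains $\asymp q^i$ distinct integers $n$, each of size $\asymp q^{4k_0+i}$, all satisfying $s_q(p(n))/s_q(n)<\varepsilon$ because $s_q(n)\ge (q-1)k_0$ while $s_q(p(n))$ is bounded by a constant multiple of $i$. This immediately gives $\alpha\ge i/(4k_0+i)\ge \varepsilon/(4(4h+1)(h+1)+\varepsilon)$. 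No recursion, no cross terms, no bookkeeping between $p$ and $a_h x^h$: the free parameter $m$ in the original construction already supplies the required $N^\alpha$ family.
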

The proof of this result is given in Section \ref{sec:mtheo2.5}.

%G.~Melfi~\cite{Me05} showed that
%\begin{equation}\label{melfi}
%  \#\left\{n<N : \quad s_{2}(n^2)=s_2(n)\right\}\gg N^{1/40}.
%\end{equation}
%Our construction permits to get a similar result in the case that $s_q(p(n))$ and $\varepsilon s_q(n)$
%are close, provided $\varepsilon$ is small.
%\begin{theo}\label{mtheo3}
%For any $\varepsilon > 0$ there exists an explicitly computable $\beta > 0$, dependent only on $\varepsilon$, $p(x)$ and $q$,
%such that
%\begin{equation}\label{th3estim}
%  \#\left\{n<N: \quad \vert s_q(p(n))-\varepsilon s_q(n)\vert \leq \frac{\varepsilon(q-1)}{2}\;\right\}\gg N^\beta,
%\end{equation}
%where the implied constant only depends on $\varepsilon$, $p(x)$ and $q$.
%\end{theo}
%We prove this result in Section \ref{sec:mtheo3}. This local version of Theorem~\ref{mtheo2.5} is based on
%a central limit theorem for $q$-additive functions of polynomials due to Bassily and K\'atai~\cite{BK95}.
%In the general case of $q$-ary digits and polynomials $p(x)$, the bound $\varepsilon (q-1)/2$ in~(\ref{th3estim}) cannot be improved without losing the uniformity
%of the result with respect to $\varepsilon >0$. This is easily seen by recalling the well-known fact
%\begin{equation}\label{wellknown}
%  s_q(n)\equiv n \bmod (q-1).
%\end{equation}
%Indeed, if we set $\varepsilon= 1/q^k$ and $p(x)=(q-1)x^2+x+a$ for
%$a\in\N$ then we find that
%\begin{align*}
%  q^k s_q(p(n))-s_q(n) & \equiv s_q(p(n))-s_q(n)\\
%  & \equiv p(n)-n \equiv a \bmod (q-1),
%\end{align*}
%which could be any of $0,1,\ldots,q-2$ depending only on the
%choice of $a$.

In Section \ref{sec:conc} we collect together questions raised in this paper
    and pose some further lines of inquiry for this research.

\section{Preliminaries and Proof of Theorem~\ref{limsupsto}}\label{sec:prelim}

First we prove some preliminary results about $s_q$ which we need in the proofs.
Recall (cf.~\cite{Li97}) that terms are said to be \textit{noninterfering} if we can use the following splitting formul\ae:
\begin{prop}\label{propsplit}
For $1\leq b<q^k$ and $a,k\geq 1$,
  \begin{align}
    s_q(aq^k+b)&=s_q(a)+s_q(b),\label{splitpos}\\
    s_q(aq^k-b)&=s_q(a-1)+(q-1)k-s_q(b-1).\label{splitneg}
  \end{align}
\end{prop}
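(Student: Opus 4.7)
The plan is to prove both identities by direct inspection of base-$q$ expansions, exploiting the hypothesis $1\leq b<q^k$, which guarantees that $b$ (and $b-1$) fits in the bottom $k$ digit positions. Under this ``noninterfering'' constraint, the relevant additions and subtractions do not produce carries between the low block of $k$ digits and the higher positions contributed by $a$ (or $a-1$), so the digit sum decomposes additively.

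For \eqref{splitpos}, I would note that the base-$q$ expansion of $aq^k$ is obtained from that of $a$ by shifting up $k$ places, producing zeros in positions $0,\dots,k-1$. Since $b<q^k$, adding $b$ only affects those last $k$ positions, filling them with the digits of $b$ (padded with leading zeros if necessary). Summing all digits yields $s_q(a)+s_q(b)$.

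For \eqref{splitneg}, the idea is to reduce to \eqref{splitpos} via the standard complementation trick. Writing
\[
aq^k - b \;=\; (a-1)q^k + (q^k - b),
\]
the hypothesis $1\leq b<q^k$ gives $1\leq q^k-b\leq q^k-1<q^k$, so \eqref{splitpos} applies with $a$ replaced by $a-1$ and $b$ by $q^k-b$, yielding $s_q(aq^k-b)=s_q(a-1)+s_q(q^k-b)$. It then remains to show $s_q(q^k-b)=(q-1)k-s_q(b-1)$. For this, write $b-1=\sum_{i=0}^{k-1} d_i q^i$ with $0\leq d_i\leq q-1$ (valid since $0\leq b-1\leq q^k-2$). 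Then
\[
q^k - b \;=\; (q^k-1) - (b-1) \;=\; \sum_{i=0}^{k-1}(q-1-d_i)\, q^i,
\]
which exhibits the base-$q$ digits of $q^k-b$ as the digitwise $(q{-}1)$-complement of $b-1$; summing gives $(q-1)k-s_q(b-1)$.

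No step presents a real obstacle: everything is digit bookkeeping. The only points needing a moment's care are the edge case $a=1$ (where $s_q(a-1)=s_q(0)=0$, consistent with the formula), and verifying that the range condition on the ``$b$'' slot is preserved under the substitution $b\mapsto q^k-b$ so that \eqref{splitpos} can legitimately be invoked inside the derivation of \eqref{splitneg}.
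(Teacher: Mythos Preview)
Your argument is correct and follows essentially the same route as the paper: the paper also invokes $q$-additivity for \eqref{splitpos}, and for \eqref{splitneg} it writes $aq^k-b=(a-1)q^k+(q^k-b)$, applies \eqref{splitpos}, and computes $s_q(q^k-b)$ by expanding $b-1=\sum_{i=0}^{k-1} b_i q^i$ and taking the digitwise $(q-1)$-complement. Your write-up is slightly more explicit about the edge cases, but the substance is identical.
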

\begin{proof} Relation~(\ref{splitpos}) is a consequence of the (strong) $q$-additivity of $s_q$. For~(\ref{splitneg}) we write
$b-1=\sum^{k-1}_{i=0}b_iq^i$ with $0\leq b_i\leq q-1$. Then
\begin{align*}
s_q(aq^k-b) &=s_q((a-1)q^k+q^k-b)=s_q(a-1)+s_q(q^k-b)\\
&=s_q(a-1)+s_q\left(\sum_{i=0}^{k-1} (q-1-b_i)q^i\right)\\
&=s_q(a-1)+\sum_{i=0}^{k-1} (q-1-b_i)
\end{align*}
implying \eqref{splitneg}.
\end{proof}

\begin{prop}\label{propsub}
The function $s_q$ is subadditive and submultiplicative, i.e., for all $a,b\in \mathbb{N}$ we have
\begin{align}
  s_q(a+b) &\leq s_q(a)+s_q(b),\label{subadd}\\
  s_q(ab) &\leq s_q(a) s_q(b).\label{submult}
\end{align}
\end{prop}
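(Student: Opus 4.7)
The plan is to prove each inequality by reducing it to the elementary fact that carrying in base-$q$ addition can only decrease the digit sum. Concretely, I would first prove subadditivity and then bootstrap from it to submultiplicativity.

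For the subadditive bound \eqref{subadd}, I would write $a=\sum_{i\ge 0} a_i q^i$ and $b=\sum_{i\ge 0} b_i q^i$ with $0\le a_i,b_i\le q-1$, and perform the schoolbook addition explicitly. Setting $c_{-1}=0$ and defining, for each $i\ge 0$, the digit $d_i\in\{0,\dots,q-1\}$ and the carry $c_i\in\{0,1\}$ by the relation $a_i+b_i+c_{i-1}=d_i+qc_i$, one gets $a+b=\sum_i d_i q^i$ and hence
\[
  s_q(a+b)=\sum_i d_i=\sum_i\bigl(a_i+b_i+c_{i-1}-q c_i\bigr)=s_q(a)+s_q(b)-(q-1)\sum_i c_i.
\]
Since $c_i\ge 0$, the inequality \eqref{subadd} follows immediately (with equality precisely when no carries occur, which ties in with the noninterference condition of Proposition~\ref{propsplit}).

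For the submultiplicative bound \eqref{submult}, I would write $b=\sum_{i=0}^{k} b_i q^i$ in base $q$ and observe that $ab=\sum_{i=0}^{k} (a b_i)\,q^i$. Iterated application of \eqref{subadd} yields
\[
  s_q(ab)\le \sum_{i=0}^{k} s_q\bigl(a b_i\, q^i\bigr)=\sum_{i=0}^{k} s_q(a b_i),
\]
using the obvious identity $s_q(mq^i)=s_q(m)$. Now $a b_i$ is a sum of $b_i$ copies of $a$, so a second application of \eqref{subadd}, used $b_i-1$ times, gives $s_q(a b_i)\le b_i\, s_q(a)$. Summing over $i$ produces $s_q(ab)\le s_q(a)\sum_{i=0}^{k} b_i=s_q(a)\,s_q(b)$, as required.

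There is no real obstacle here: both statements are standard and follow from the single observation that carries contribute $-(q-1)$ to the digit sum. The only thing to be slightly careful about is keeping track of the shift $q^i$ in the submultiplicative step, but this is harmless since $s_q$ is invariant under multiplication by powers of $q$.
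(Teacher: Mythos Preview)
Your proof is correct and follows essentially the same route as the paper: the paper also records the identity $s_q(a+b)=s_q(a)+s_q(b)-(q-1)r$ with $r$ the number of carries, and then derives \eqref{submult} from \eqref{subadd} by expanding $b$ in base $q$ and writing $ab_i$ as a $b_i$-fold sum of copies of $a$. The only difference is that you spell out the carry recursion explicitly, whereas the paper merely asserts the carry identity.
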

\begin{proof}
  The proof follows on the lines of~\cite[Section~2]{Ri08}.
As for~(\ref{subadd}), an even stronger result is true,
namely that
    $s_q(a+b) = s_q(a) + s_q(b) - (q-1) \cdot r$
where $r$ is the number of ``carry'' operations needed when adding $a$
    and $b$.
  Writing $b=\sum_{i=0}^{k-1} b_i q^i$ we also have
  \begin{align*}
    s_q(ab)&=s_q\left(a\sum_{i=0}^{k-1} b_iq^i\right)\leq \sum_{i=0}^{k-1} s_q(ab_i)\\
    &=\sum_{i=0}^{k-1} s_q(\underbrace{a+\dots+a}_{\mbox{$b_i$ times}})\leq s(a)\sum_{i=0}^{k-1} b_i,
  \end{align*}
  where we used twice the subadditivity of $s_q$ and we get~(\ref{submult}).
\end{proof}

\begin{proof}[Proof of Theorem \ref{limsupsto}]
This is an almost direct generalization of Stolarsky's proof (see~\cite[Section~2]{St78}) and Propositions~\ref{propsplit}
and~\ref{propsub}. First, suppose that $p(n)$ has only nonnegative coefficients. Then using
Proposition~\ref{propsub} we see that $s_q(p(n))\leq p(s_q(n))$. Therefore
\begin{align}
  \frac{s_q(p(n))}{s_q(n)}
  &\leq\frac{\min\{(q-1) \left(\log_q p(n)+1\right), p(s_q(n))\}}{s_q(n)}\nonumber\\
  &\leq c_1\cdot \frac{\min\{\log_q n, s_q(n)^h\}}{s_q(n)} \label{stolcases}
\end{align}
where $c_1$ only depends on $p(x)$ and $q$.
If $\log_q n \leq s_q(n)^h$ then we have  $(\log_q n)^{1/h} \leq s_q(n)$.
From this and~(\ref{stolcases}), we get that
$$  \frac{s_q(p(n))}{s_q(n)}
  \leq c_1\cdot \frac{\log_q n}{(\log_q n)^{1/h}}
  = c_1 (\log_q n)^{1-1/h}.
$$
Alternately, if $\log_q n > s_q(n)^h$ then we have $(\log_q
n)^{1/h}
> s_q(n)$ and
$$  \frac{s_q(p(n))}{s_q(n)}
  \leq c_1\cdot s_q(n)^{h-1}
  \leq c_1 (\log_q n)^{1-1/h}.
$$
For the lower bound, set
\begin{equation}\label{kdef}
  k=\lfloor \log_q (\lambda (h+1)!)\rfloor +1,
\end{equation}
where $\lambda = \max \{a_i: 0\leq i\leq h\}$. By Stolarsky's use of the Bose-Chowla Theorem,
there are infinitely many integers $M\geq 3(k+1)$ such that there are integers $y_1, y_2, \dots, y_N$
with $N:=\lfloor (M+1)/(k+1)\rfloor-1,$ with the following three properties:

\begin{enumerate}
  \item[(i)] $1\leq y_1<y_2<\dots<y_N\leq M^h$,
  \item[(ii)] $y_i\equiv 0 \bmod (k+1)$,
  \item[(iii)]  all sums $y_{j_1}+\dots +y_{j_h}$ are distinct (\textit{distinct sum property});
    here $j_1, j_2, \ldots, j_{h} \in \{1, 2, \ldots, N\}$ with possible
    repetition.
\end{enumerate}
Note that (iii) implies the distinct sum property for all $y_{j_1}+\dots +y_{j_i}$ with $1\leq i\leq h$.
Now set
$$n=\sum_{i=1}^N q^{y_i},$$
such that
\begin{equation}\label{pnq}
  p(n)=\sum_{i=0}^h a_i n^i = \sum_{i=0}^h \sum {}^{'} a_i \alpha(i; h_1, \dots, h_N) q^{y_1h_1+\dots +y_N h_N}
\end{equation}
where the summation $\sum {}^{'}$ is over all vectors $(h_1,\dots,h_N)$  satisfying $h_1+\dots+h_N=i$, and
$\alpha(i; h_1, \dots, h_N)$ denote the multinomial coefficients $i!/(h_1!\dots h_N!)$ bounded by $i!$.
Consider~(\ref{pnq}) as a polynomial in $q$. By the distinct sum property (iii) we have for all $0\leq i\leq h$ that
$$\#\{y_1h_1+\dots+y_N h_N: \; h_1+\dots +h_N=i\}= \binom{N+i-1}{N-1}.$$
Thus the coefficients of $q^{y_1h_1+\dots +y_N h_N}= q^{R}$ with $h_1+\dots+h_N=h$ in~(\ref{pnq}) are nonzero and bounded by
\begin{equation}\label{abound}
  a_h h!+a_{h-1} (h-1)!+\dots +a_0 \leq \lambda (h+1) h!< q^k.
\end{equation}
By~(\ref{abound}) and (ii), the sums $y_1h_1+\dots +y_N h_N\equiv 0$ mod $(k+1)$ and hence the powers $q^R$ are
noninterfering and we get
$$\frac{s_q(p(n))}{s_q(n)}\geq \binom{N+h-1}{N-1}\cdot \frac{1}{N}\geq \frac{N^{h-1}}{h!}.$$
By construction,
$$\log_q n\leq y_N+1 \leq 2^{h+1} N^h (k+1)^h.$$
The claim now follows by observing that $k$ is largest for $q=2$.

Secondly suppose that $p(n)$ has at least one negative coefficient. Then the first claim follows by observing
that $s_q(p(n))\leq \lfloor \log_q p(n)\rfloor +1$ for sufficiently large $n$. For the lower bound, denote by
$a_j$ the negative coefficient with smallest index $j$, i.e., $a_j<0$ and $a_{j-l}\geq 0$ for $1\leq l\leq j $.
Then for all sufficiently large $k$ we have
\begin{align*}
  s_q(p(q^k))&=s_q(a_h q^{hk}+\dots+a_{j+1} q^{(j+1)k}+a_j q^{jk}+a_{j-1} q^{(j-1)k}+\dots +a_0)\\
  &=s_q(a_h q^{(h-j)k}+\dots+a_{j+1} q^{k}+a_j )+\sum_{l=0}^{j-1} s_q(a_l)\\
  &\geq k(q-1)-s(-a_j-1)\\
  &> k(q-1-\varepsilon).
\end{align*}
Here we have used Proposition~\ref{propsplit}.
As $s_q(q^k) = 1$ and $\log_q(q^k) = k$, the result follows.
This completes the proof of Theorem~\ref{limsupsto}.
\end{proof}

\section{Proof of Theorem~\ref{mtheo}}\label{sec:mtheo}

The proof of Theorem~\ref{mtheo} will use a construction of a sequence with noninterfering terms. First
assume that $p(x)=x^h$, $h\geq 2$ and define the polynomial
$$t_m(x)=m x^4+mx^3-x^2+mx+m$$
where $m\in \Z$ with $m\geq 3$.
By consecutively employing~(\ref{splitpos}) and~(\ref{splitneg}) we see that for all $k$ with $q^k>m$,
\begin{equation}\label{crx}
  s_q(t_m(q^k))=(q-1)k+s_q(m-1)+3s_q (m).
\end{equation}
The appearance of $k$ in~(\ref{crx}) is crucial. The next lemma lies at the heart of the proofs. We
will use it to see that $s_q(t_m(q^k)^h)$, $h\geq 2$, is independent of $k$ whenever $k$ is sufficiently large.
Furthermore, we will exploit the fact that the coefficients of $[x^i]$ in $t_m(x)^h$ are polynomials in $m$ with
alternating signs.
\begin{lem}\label{lemuseful}
  For fixed $h\geq 2$ and $m\geq 3$, we have
  $$ t_m(x)^h =\sum_{i=0}^{4h} c_{i,h}(m) \; x^i$$
  satisfying
  \begin{equation}\label{cestim}
    0<c_{i,h}(m) \leq (2mh)^h \qquad i=0,1,\dots, 4h.
  \end{equation}
In fact, we have
\begin{align}\label{coc1}
c_{0,h}(m)=c_{4h,h}(m)=m^h, \ \ c_{1,h}(m)=c_{4h-1,h}(m)=hm^h.
\end{align}
%  Moreover, for $i=0,1,\dots, 2h-1$ we have
%\begin{align}\label{i4h-i}
%c_{i,h}(m)=c_{4h-i,h}(m)=\sum_{j=h-\lfloor i/2\rfloor}^h d_{j,i,h}m^j
%\end{align}
%  where $d_{j,i,h} d_{j+1,i,h}<0$ for all $j$ with $h-\lfloor i/2\rfloor\leq j< h$.
\end{lem}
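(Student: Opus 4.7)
The plan is to prove the lemma in three stages. The closed forms $c_{0,h}(m)=c_{4h,h}(m)=m^h$ and $c_{1,h}(m)=c_{4h-1,h}(m)=hm^h$ are routine multinomial identities; the bound $c_{i,h}(m)\leq (2mh)^h$ is a rather loose upper estimate; but the strict positivity $c_{i,h}(m)>0$ on the entire range $0 \leq i \leq 4h$ is the heart of the lemma and precisely the place where the hypothesis $m\geq 3$ is used.

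For the explicit values, write $t_m(x) = m+mx-x^2+mx^3+mx^4$ and expand $t_m(x)^h$ by the multinomial theorem. The monomials $x^0, x^1, x^{4h-1}, x^{4h}$ each arise from a unique selection pattern across the $h$ factors (pick $m$ in every factor; pick $mx$ in exactly one factor and $m$ in the remaining $h-1$; and the symmetric top counterparts), giving the stated closed forms. For the upper bound, set $\bar t_m(x) = mx^4+mx^3+x^2+mx+m$; the triangle inequality applied termwise in the multinomial expansion gives $c_{i,h}(m) \leq [x^i]\bar t_m(x)^h \leq \bar t_m(1)^h = (4m+1)^h$, which is at most $(5m)^h \leq (2mh)^h$ for $h \geq 3$. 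For $h = 2$ a direct expansion gives the coefficient vector $(m^2,\; 2m^2,\; m^2-2m,\; 2m^2-2m,\; 4m^2+1,\; 2m^2-2m,\; m^2-2m,\; 2m^2,\; m^2)$, whose maximum element $4m^2+1$ is bounded by $16m^2 = (2\cdot 2 m)^2$.

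The main obstacle is positivity. I would use the factorization $t_m(x) = mP(x) - x^2$ with $P(x) = (x+1)^2(x^2-x+1) = x^4+x^3+x+1$; the binomial theorem then yields
\[ c_{i,h}(m) = \sum_{k=0}^h (-1)^k \binom{h}{k} m^{h-k} p_{i-2k,\,h-k}, \qquad p_{j,\ell} := [x^j] P(x)^\ell = \sum_{r \geq 0} \binom{\ell}{r}\binom{\ell}{j-3r}, \]
using $P=(x+1)(x^3+1)$ in the second expression. The leading $k=0$ term is $m^h p_{i,h}$, and one checks that $p_{i,h} \geq 1$ for $h \geq 2$ and every $0 \leq i \leq 4h$ because the index set $\max(0,\lceil (i-h)/3\rceil) \leq r \leq \min(h,\lfloor i/3\rfloor)$ is always nonempty in that range. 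The remaining task is to show that this positive leading term strictly dominates the alternating tail $\sum_{k\geq 1}$ when $m \geq 3$.

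The cleanest route I see is induction on $h$. The base $h = 2$ is settled by the coefficient vector above. For the inductive step, $t_m^{h+1} = t_m \cdot t_m^h$ gives
\[ c_{i,h+1}(m) = m\bigl(c_{i-4,h}(m) + c_{i-3,h}(m) + c_{i-1,h}(m) + c_{i,h}(m)\bigr) - c_{i-2,h}(m), \]
so the task reduces to showing that $c_{i-2,h}(m)$ is controlled by the factor $m$ times the sum of its four neighbors. The hard part of the proof will be establishing this comparative bound, most naturally by a term-pairing argument in the binomial sum above (grouping consecutive $k$'s and using that $m\geq 3$ beats the ratios $(h-k)/(k+1)$ together with estimates on $p_{j,\ell}/p_{j-2,\ell-1}$), or by strengthening the inductive hypothesis to include an explicit quantitative lower bound on each $c_{i,h}(m)$. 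The sharpness of the threshold is confirmed by the fact that $t_2(x)^2$ has a vanishing coefficient at $x^6$, so any valid proof must genuinely invoke $m-2 \geq 1$.
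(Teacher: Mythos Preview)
Your treatment of the closed forms \eqref{coc1} and the upper bound in \eqref{cestim} is fine (the paper bounds instead by the coefficients of $m^h(1+x+\cdots+x^4)^h$ and then by $m^h h!\,e^h$, but your route via $(4m+1)^h$ with a separate check at $h=2$ works just as well). The genuine gap is in the positivity argument: you correctly identify it as the heart of the lemma, you verify the base case $h=2$, but your inductive step is left as a plan rather than a proof. The recurrence $c_{i,h+1}=m(c_{i-4,h}+c_{i-3,h}+c_{i-1,h}+c_{i,h})-c_{i-2,h}$ does not obviously close up under the bare hypothesis ``all $c_{j,h}>0$'': to push it through you would need a quantitative strengthening such as $c_{i-2,h}\le m(c_{i-1,h}+c_{i-3,h})$, and neither of your two suggested mechanisms (term-pairing in the binomial sum, or a strengthened inductive hypothesis) is actually carried out.

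The paper sidesteps all of this with a much simpler idea that you are missing: verify by direct expansion that \emph{both} $t_m(x)^2$ and $t_m(x)^3$ have strictly positive coefficients for $m\ge 3$, and then observe that every integer $h\ge 2$ can be written as $2h_1+3h_2$ with $h_1,h_2\ge 0$, so that $t_m(x)^h=\bigl(t_m(x)^2\bigr)^{h_1}\bigl(t_m(x)^3\bigr)^{h_2}$ is a product of polynomials with positive coefficients and hence itself has positive coefficients. This two-line argument replaces your entire inductive scheme; the only computation beyond what you already did is the $h=3$ coefficient list.
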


\begin{proof}A direct calculation shows that $t_m(x)^2$ and $t_m(x)^3$ have
property \eqref{cestim} provided $m\geq 3$. Set $h=2h_1+3h_2$ with
$\max(h_1,h_2)\geq 1$. Then
$$t_m(x)^h = \underbrace{t_m(x)^2\dots t_m(x)^2}_{\mbox{$h_1$ times}} \cdot \underbrace{t_m(x)^3 \dots t_m(x)^3}_{\mbox{$h_2$ times}}.$$
Since products of polynomials with all positive coefficients have all positive coefficients too, we get $c_{i,h}(m)>0$ for all
$i=0,1,\dots, 4h$. On the other hand, the coefficients of $t_m(x)^h$ are
clearly bounded by the corresponding coefficients of the polynomial
\begin{align*}
m^h(1+x+x^2+x^3+x^4)^h=m^h\sum_{0\leq l\leq k\leq j\leq i\leq h}\binom{h}{i}
\binom{i}{j}\binom{j}{k}\binom{k}{l}x^{i+j+k+l}.
\end{align*}
 Therefore, for all $i$ with $0\leq i\leq 4h$, we have
\begin{align}\label{eq:c_i}
c_{i,h}(m)&\leq m^h \sum_{0\leq l\leq k\leq j\leq i\leq h} \frac{h!}{(h-i)!(i-j)!(j-k)!(k-l)!}\\
&\leq m^hh! \;{\rm exp}(h-i+i-j+j-k+k-l)\nonumber\\
&\leq m^hh!e^h\leq (2mh)^h.\nonumber
\end{align}
%Finally, rewrite $t_m(x)^h$ in the form
%$$t_m(x)^h=\sum_{j=0}^h (-1)^j \binom{h}{j} m^{h-j} (x^4+x^3+x+1)^{h-j} x^{2j}.$$
%It is easy to see that $c_{i, h}(m)=c_{4h-i, h}(m)$ and the statements
%\eqref{coc1} and \eqref{i4h-i} follow.
\end{proof}

\begin{proof}[Proof of Theorem \ref{mtheo}]
Now let $k$ be such that $q^k>(2mh)^h$. By~(\ref{cestim}) and~(\ref{splitpos}) we then have
$$s_q(t_m(q^k)^h)=s_q(c_{0,h}(m))+s_q(c_{1,h}(m))+\dots+s_q(c_{4h,h}(m))$$
where $s_q(c_{i,h}(m))$ is bounded by a function which only depends on $q$, $m$ and $h$.
Together with~(\ref{crx}) and letting $k\to \infty$ we thus conclude for fixed $m\geq 3$,
$$\lim_{k\to\infty} s_q(t_m(q^k)^h)/s_q(t_m(q^k))=0,$$
as wanted.

Finally we consider the case with a general polynomial instead of $x^h$. Write
\begin{equation}\label{gen}
p(t_m(x))=a_h t_m(x)^h+a_{h-1} t_m(x)^{h-1}+\dots+a_1 t_m(x) +a_0
\end{equation}
where $a_h>0$ and $h\geq 2$. First  suppose that all the coefficients
are nonnegative. Lemma \ref{lemuseful} shows that for $i$ with
$2\leq i\leq h$ all the coefficients of $t_m(x)^i$ are positive. Also,
the coefficient
$[x^2]$ in $p(t_m(x))$ is nonnegative if we choose $m\geq 3$ sufficiently large. In fact, a sufficient
condition is $a_h \left( \binom{h}{2} m^h-h m^{h-1}\right)\geq a_1$ which is true whenever
\begin{equation}\label{mestim}
  m\geq \left(\frac{2a_1}{h(3h-5) a_h}\right)^{1/(h-1)}.
\end{equation}
If the polynomial $p(x)$ has negative coefficients then there is a positive
integer $b$ such that the polynomial $p(x+b)$ has all positive coefficients.
A good choice for $b$ is
\begin{equation}\label{bestim}
b=\left\lceil 1+\frac{\lambda}{a_h}\right\rceil=
1+\left\lceil \frac{\lambda}{a_h}\right\rceil
,\qquad \lambda=\max \{|a_i|: 0\leq i\leq h\}.
\end{equation}
This is easy to see since both $p(x+b)-(a_h(x+b)^h-\lambda \sum^{h-1}_{i=0}
(x+b)^i)$ and
\begin{align*}
a_h(x+b)^h-&\lambda \sum^{h-1}_{i=0}(x+b)^i\\ &=
\left(a_h-\frac{\lambda}{x+b-1}\right)(x+b)^h+\frac{\lambda}{x+b-1}\\
&=\frac{1}{x+b-1}\left((a_hx+(b-1)a_h-\lambda)(x+b)^h+\lambda\right)
\end{align*}
have nonnegative coefficients when $b\geq 1+\frac{\lambda}{a_h}$. Thus
if $q^k>m+b$ then
$s_q(t_m(q^k)+b)=(q-1)k+s_q(m-1)+2s_q(m)+s_q(m+b)$ and one similarly obtains
for fixed $m$,
$$\lim_{k\to\infty} s_q(p(t_m(q^k)+b))/s_q(t_m(q^k)+b)=0.$$
This completes the proof of Theorem~\ref{mtheo}.
\end{proof}

\section{Proof of Theorem~\ref{mtheo2}}\label{sec:mtheo2}

The construction of an extremal sequence in the proof of Theorem~\ref{mtheo} gives a rough
bound on the minimal $n$ such that $s_q(n^h)<s_q(n).$ We first illustrate the method in the case $q=2$, $h=3$.

Set $m=3$. Then for all $k$ with $2^k>\max\limits_{0\leq i\leq 4h}{c_{i,h}(m)}=225$ we have
\begin{align*}
  s_2(t_3(2^k)) &= k+1+6=k+7,\\
  s_2(t_3(2^k)^3) &= 2\cdot(4+3+4+4+4+4)+4=50.
\end{align*}
Therefore, by setting $k=44$, we get
$$\min \{n: s_2(n^3)<s_2(n)\} < 2^{178}.$$
It is possible to show that the minimal such $n$ to be
    $n=407182835067\approx 2^{39}$.

\begin{proof}[Proof of Theorem \ref{mtheo2}]
Consider the general polynomial
$$p(x)=a_hx^h+a_{h-1}x^{h-1}+\dots+a_0 \in \Z[x]$$
with $a_h>0$, $h\geq 2$.
Let $\lambda = \max |a_i|$.
Pick $b$ such that $p(x+b)$ has only nonnegative coefficients, as in \eqref{bestim}.
Pick $m \geq 3$ such that $p(t_m(x)+b)$ has only nonnegative coefficients,
    as in \eqref{mestim}.
Our task is to bound the coefficients of of $p(t_m(x)+b)\in \Z[x]$.

To begin with, we estimate the coefficient of $x^i, 0\leq i\leq h$ of
$p(x+b)$,
\begin{equation}\label{pyestim}
\sum_{j=i}^h a_j b^{j-i} \binom{j}{i} \leq \sum_{j=i}^h \left\vert a_j b^{j-i} \binom{j}{i} \right\vert \leq \lambda (2b)^h.
\end{equation}
Combining \eqref{pyestim} with \eqref{coc1}, we find that the
constant term of $p(t_m(x)+b)$ is bounded by
\begin{equation*}
\lambda (2b)^h \sum^h_{i=0} m^i=\lambda (2b)^h\frac{m^{h+1}-1}{m-1}
 \leq \lambda h(4mbh)^h
\end{equation*}
since $m\geq 3$ and $h\geq 2$. Again from \eqref{pyestim} and
\eqref{cestim}, we find that the other coefficients of
$p(t_m(x)+b)$ are bounded by
\begin{equation}\label{finalestim}
  \lambda (2b)^h \sum_{i=1}^h (2mi)^i \leq \lambda h(4 mbh)^h.
\end{equation}
Therefore the coefficients of $p(t_m(x)+b)$ are bounded by
$\lambda h(4 mbh)^h$. Hence for $q^k>m+b$, we have
\begin{equation}\label{sqhestim}
s_q(p(t_m(q^k))+b)\leq (q-1)(4h+1)
\left(\frac{\log(\lambda h(4mbh)^h)}{\log q}+1\right).
\end{equation}

On the other hand, we clearly have $s_q(t_m(q^k)+b)> (q-1)k$ for $q^k>m+b$.
Let
\begin{align*}
k=\left\lfloor \frac{4h+1}{\varepsilon}
\left(\frac{\log(\lambda h(4mbh)^h)}{\log q}+1\right)\right\rfloor +1.
\end{align*}
Then for $0<\varepsilon<h(4h+1)$ we have $q^k>m+b$ and hence
$$\frac{s_q(p(t_m(q^k)+b))}{s_q(t_m(q^k)+b)} <\varepsilon.$$
Therefore,
\begin{align*}
  \min \left\{n:\; \frac{s_q(p(n))}{s_q(n)}<\varepsilon\right\}
  &\leq t_m(q^k)+b\\
  &< m(q^{4k}+q^{3k}+q^k+1)\\
  &<2m q^{4k}\\
  &\leq 2mq^4 \left(q\lambda h(4mbh)^h\right)^{(16h+4)/\varepsilon}.
\end{align*}
Setting
    $B := 2 m q^4$ and
    $C := \left(q\lambda h(4mbh)^h\right)^{16h+4}$, it gives the desired
    result.
\end{proof}

\section{Proof of Theorem~\ref{mtheo2.5}}\label{sec:mtheo2.5}

We start our analysis with the simple case of $p(n) = n^h$. Let $t_m(x) = m x^4 + m x^3 - x^2 + m x + m$
as in Section~\ref{sec:mtheo}.
Letting $n = n_{k,m} = t_m(q^k)$ we see from equation~(\ref{crx}) that, for $m<q^k$,
     \[ s_q(n) = (q-1) k + s_q(m-1) + 3 s_q(m) \geq (q-1) k.  \]
If $m$ has $i$ $q$-ary digits then $n$ will have
    $4 k + i$ $q$-ary digits. We see that $t_m(q^k)^h$ is of length at most $h (4 k + i)$.

Let $t_m(q^k)^h = \sum_{j=0}^{4h} c_{j} q^{kj}$.
These $c_{j}$ are dependent upon $m$ and $h$, but are independent of $k$ for $k$ sufficiently large.
We see from equation (\ref{eq:c_i}) that
    $c_j \leq (m h \cdot 2)^h$ and hence has
    at most $h i + h \log_q h + h$ $q$-ary digits.
As there are $(4 h + 1)$ coefficients $c_j$ and
$s_q(c_j)\leq (q-1)(hi + h\log_q h + h)$, we get
%    $s_q(n^h)$ would be largest if each
%    $c_j = (\underbrace{(q-1)(q-1)\dots(q-1)}_{|c_j|})_q$,
%this gives
    $$s_q(n^h) \leq (q-1)  (4h+1) \left(h i + h \log_q h  + h  \right).$$

Combining these together we have
\begin{align*}
     \frac{s_q(n^h)}{s_q(n)} &\leq
       \frac{(q-1) (4h+1) \left(hi + h\log_q h + h  \right)}{(q-1) k}\\
       & =
       \frac{(4h+1) \left(h i + h\log_q h + h  \right)}{k}.
\end{align*}
Without loss of generality suppose that $0<\varepsilon<h(4h+1)$.
Let $k_0$ be large enough so that $k_0>i$ and
 \[\frac{(4h+1) \left(h i + h \log_q h + h\right)}{k_0}   < \varepsilon. \]
For $i$ sufficiently large, we can take $k_0 =
    \left \lfloor \frac{(4h+1)(hi+i)}{\varepsilon}\right \rfloor $.
Then this says that for every sufficiently large $m$ having $i$ $q-$ary digits,
there is an integer $n$ having $4 k_0 + i$ $q-$ary digits such that
    \[ \frac{s_q(n^h)}{s_q(n)} < \varepsilon. \]
Moreover, by construction, each distinct $m$ will give rise to a distinct
    $n$. Letting \[ \alpha = \frac{i}{4 k_0 + i} \geq
    \frac{i}{4(4h+1)  (h+1)i/\varepsilon + i} =
    \frac{\varepsilon}{4 (4h+1) (h+1) + \varepsilon} \]
we get as $N\to \infty$ that
 \[ \# \left\{n < N : \quad \frac{s_q(n^h)}{s_q(n)} < \varepsilon
    \right\} \gg N^\alpha. \]

Now to extend this for general $p(x)$, we proceed as we did in the proof
    of Theorem \ref{mtheo}.
First consider the case where $p(x)$ has only nonnegative coefficients.
There is a lower bound on $m$ such that $p(t_m(x))$ will have only
    nonnegative coefficients and we proceed as before, after which the
    result follows as before.
Second, if $p(x)$ has at least one negative coefficient, then consider instead the polynomial
    $p(x+b)$ for sufficiently large $b$, which will have only nonnegative
    coefficients, and the result follows.

\section{Conclusions and further work}
\label{sec:conc}

All results in this paper have explicitly computable constants for
existence or density results. Many times these constants are far
from the observed experimental values, and it is quite likely that
many of them may be strengthened. Examples include
Theorems~\ref{mtheo2} and~\ref{mtheo2.5}.
% and~\ref{mtheo3}.

Some obvious generalizations of this problem are in looking at the ratios of
$\frac{s_q(p_1(n))}{s_q(p_2(n))}$, or even more generally
of $\frac{s_{q_1}(p_1(n))}{s_{q_2}(p_2(n))}$ with respect to two different
bases $q_1, q_2$. Alternately, instead of looking at polynomials $p(x) \in \Z[x]$,
we could look at quasi-polynomials $\lfloor p(n) \rfloor$ with $p(x) \in \R[x]$.

As another direction, we could consider expansions in other numeration
systems, e.g. the Zeckendorf expansion (or expansions with respect to linear
recurrences) or the balanced based $q$ representation. In the latter
case, for example, $11 = 1 \cdot 3^2  + 1 \cdot 3^1 - 1 \cdot 3^0$, and  $s_3'(11) = 1 + 1 - 1 = 1$,
being the sum-of-digits function in this representation. This value will quite often be $0$, but its
extremal distribution could still have some interesting properties.

\section*{Acknowledgements.}

The authors thank J. Shallit for his remarks on a previous version of this paper.


\begin{thebibliography}{9}

\bibitem{BK95}
N. L. Bassily, I. K\'atai, Distribution of the values of $q$-additive functions on polynomial sequences,
\textit{Acta Math. Hung.} \textbf{68} (1995), 353--361.

\bibitem{BC62}
R. C. Bose, S. Chowla, Theorems in the additive theory of numbers,
\textit{Comm. Math. Helv.} \textbf{37} (1962/63), 141--147.

\bibitem{DT06}
C. Dartyge, G. Tenenbaum, Congruences de sommes de chiffres de valeurs polynomiales,
\textit{Bull. London Math. Soc.} \textbf{38} (2006), no. 1, 61--69.

\bibitem{De75}
H. Delange, Sur la fonction sommatoire de la fonction ``somme des chiffres'', \textit{Enseign. Math.} \textbf{21} (1975), 31--47.

\bibitem{DR05}
M. Drmota, J. Rivat, The sum-of-digits function of squares, \textit{J. London Math. Soc. (2)} \textbf{72} (2005), no. 2, 273--292.

\bibitem{HR83}
H. Halberstam, K. F. Roth, \textit{Sequences}, Second edition. Springer-Verlag, New York-Berlin, 1983.

\bibitem{HLS}
K. G. Hare, S. Laishram, T. Stoll, The sum of digits of $n$ and $n^2$,
    submitted, arxiv.org

\bibitem{Li97}
B. Lindstr\"om, On the binary digits of a power, \textit{J. Number Theory} \textbf{65} (1997), 321--324.

\bibitem{MR09-1}
C. Mauduit,  J. Rivat, Sur un probl\'eme de Gelfond: la somme des chiffres des nombres premiers, \textit{Annals of Mathematics}, to appear.

\bibitem{MR09-2}
C. Mauduit,  J. Rivat, La somme des chiffres des carr\'es, \textit{Acta Mathematica} \textbf{203} (2009), 107--148.

\bibitem{Me05}
G. Melfi, On simultaneous binary expansions of $n$ and $n^2$, \textit{J. Number Theory} \textbf{111} (2005), no. 2, 248--256.

\bibitem{Pe02}
M. Peter, The summatory function of the sum-of-digits function on polynomial sequences, \textit{Acta Arith.} \textbf{104} (2002), no. 1, 85--96.

\bibitem{Ri08}
T. Rivoal, On the bits counting function of real numbers, \textit{J. Aust. Math. Soc.} \textbf{85} (2008), no. 1, 95--111.

\bibitem{St78}
K. B. Stolarsky, The binary digits of a power, \textit{Proc. Amer. Math. Soc.} \textbf{71} (1978), 1--5.

\end{thebibliography}
\end{document}